\newtheorem*{theorem}{Theorem}
\newtheorem*{corollary}{Corollary}
\newtheorem{lemma}{Lemma}
\newtheorem*{proposition}{Proposition}
\theoremstyle{remark}
\newtheorem{remark}{Remark}
\newcommand{\angleb}[1]{\langle #1 \rangle} 
\newcommand{\circleb}[1]{\langle #1 \rangle^{\circ}} 
\begin{document}

\title{Pairs of boundary slopes with small differences}

\author{Kazuhiro Ichihara}

\address{Department of Mathematics, 
College of Humanities and Sciences, Nihon University,
3-25-40 Sakurajosui, Setagaya-ku, Tokyo 156-8550, Japan}

\email{ichihara@math.chs.nihon-u.ac.jp}

\thanks{The author is partially supported by
Grant-in-Aid for Young Scientists (B), No.~23740061, 
Ministry of Education, Culture, Sports, Science and Technology, Japan.}

\dedicatory{Dedicated to Professor Fico Gonz\'{a}lez Acu\~{n}a on the occasion of his 70th birthday}

\begin{abstract}
We show that, for any positive real number, 
there exists a knot in the 3-sphere 
admitting a pair of boundary slopes 
whose difference is at most the given number. 
\end{abstract}

\keywords{boundary slope, Montesinos knot}

\subjclass[2010]{57M25}

\date{\today}

\maketitle

\section{Introduction}

In this paper we give somewhat curious examples 
of paris of boundary slopes for knots in the 3-sphere $S^3$. 
The \textit{boundary slope} of an essential surface $F$ in a knot exterior 
is defined as the slope 
represented by the boundary of $F$ 
on the peripheral torus of the knot.  
Note that such slopes for a knot in $S^3$ 
are naturally parametrized by rational numbers.

In \cite{CS84},  for each non-trivial knot, Culler and Shalen proved that 
there always exist at least two boundary slopes. 
Thus we can take a pair of boundary slopes, 
and consider their difference as a pair of rational numbers. 
Actually, in \cite{CS99}, they showed that, 
for any non-trivial knot in $S^3$ not having the meridional boundary slope, 
there always exists a pair of boundary slopes 
whose difference is at least 2. 

It is known that this lower bound can be improved for some class of knots. 
For example, 
as observed in \cite[Section 1]{IMcd},  
any alternating knot admits a pair of boundary slopes 
whose difference is bounded from below in terms of its crossing number. 
As another example, in \cite{CS04}, 
it was shown that, 
if a knot exterior contains only two essential surfaces, 
then the difference between their boundary slopes 
are bounded from below in terms of the Euler characteristics and the number of sheets 
(with some exceptional cases). 
Furthermore, in \cite{IMlb}, 
the author together with Mizushima showed that, 
for any nontrivial Montesinos knot, 
there exists a pair of boundary slopes 
whose difference is bounded from below similarly. 

In contrast to these results, in this paper, we show that 
there exists pairs of boundary slopes for knots with arbitrarily small differences. 

\begin{corollary}
For any positive number $\varepsilon$, 
there exists a knot in $S^3$ 
admitting a pair of boundary slopes 
whose difference is at most $\varepsilon$. 
\end{corollary}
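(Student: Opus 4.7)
The plan is to exhibit an explicit infinite family of Montesinos knots $\{K_n\}_{n\ge 1}$, compute two of their boundary slopes, and show that the difference between these two slopes tends to zero as $n\to\infty$; given $\varepsilon>0$, choosing $n$ sufficiently large then gives the desired knot. A natural family to try is a length-three Montesinos knot whose first two rational tangle slopes are held fixed and whose third slope is of the form $p_n/q_n$ with $q_n\to\infty$; the heuristic is that large denominators allow pairs of essential surfaces whose partial-slope contributions can be made arbitrarily close.

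To compute the boundary slopes I would apply the Hatcher--Oertel algorithm, in which essential surfaces in a Montesinos knot exterior correspond to edgepath systems in the diagrams associated to each rational tangle, and the boundary slope of the resulting surface is given by an explicit formula (twice the sum of partial slopes over the three tangles, plus a twist correction). The idea is to locate, within a single $K_n$, two essential candidate surfaces whose edgepath systems agree on the first two tangles and differ on the third only by a short detour near the vertex $p_n/q_n$. The difference of their boundary slopes is then controlled by reciprocals of the denominators along this detour and can be driven below any prescribed $\varepsilon$ by taking $q_n$ large.

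The step I expect to be the main obstacle is verifying that both candidate surfaces are genuinely essential (incompressible and $\partial$-incompressible) in the knot exterior; while the Hatcher--Oertel criteria on edgepath systems are combinatorial and explicit, the simultaneous demand that both surfaces be essential \emph{and} that their boundary slopes differ by a quantity going to zero imposes tight constraints on the tangle parameters, so the choice of family has to be tuned carefully. Once essentiality is established and the slope difference is bounded above by an expression in $1/q_n$, the corollary follows immediately by choosing $n$ with $q_n$ large enough.
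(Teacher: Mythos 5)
Your plan is the right shape and in fact coincides in outline with what the paper does: the paper takes the family $K_n = M(-1/2,\,2/5,\,1/n)$ (odd $n\ge 11$), uses the Hatcher--Oertel machinery to exhibit two essential surfaces with boundary slopes $2(n-1)^2/n$ and $2(n^2-9n+15)/(n-7)$, and observes that the difference equals $2\bigl(1/(n-7)-1/n\bigr)\to 0$. But as written your proposal is only a plan: the corollary is an existence statement, and everything that makes it true is deferred. You never fix a concrete family, never write down the two edgepath systems, never verify that they satisfy the conditions (E1)--(E4) to give candidate surfaces, never check essentiality, and never compute the two slopes to confirm that their difference is $O(1/q_n)$. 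You yourself flag essentiality and the ``tuning'' of the family as the expected obstacle; that obstacle is precisely the content of the paper's Theorem, so the gap is not a technicality but the whole proof.

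Two more specific points. First, essentiality is not an open-ended difficulty here: the paper handles it with two concrete Hatcher--Oertel criteria, namely that a type~I edgepath system whose last edges all have a common sign gives essential surfaces, and that a type~I system containing a constant edgepath does as well; any completed argument should invoke checks of this kind rather than leave essentiality as an acknowledged unknown. Second, your proposed mechanism --- two edgepath systems that agree on the first two tangles and differ on the third ``only by a short detour'' near $p_n/q_n$ --- is not how the known example works and is not obviously realizable: in the paper's pair the two edgepath systems differ in all three tangles (one even uses a constant edgepath on the $-1/2$ tangle), and the smallness of the slope difference comes from the twists being $2/n$ and $2/(n-7)$ while the common Seifert-surface correction $-(4-2n)$ cancels. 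So even granting the general strategy, the specific construction you sketch would still need to be replaced or substantiated before the corollary follows.
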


Actually we give a concrete examples of a sequence of knots 
with paris of boundary slopes becoming arbitrarily close to each other. 

\begin{theorem}
The Montesinos knot $K_n = M( -1/2 , 2/5 , 1/n)$ with an odd positive integer $n \ge 11$  
admits 
a pair of boundary slopes $2(n-1)^2/n$ and $2(n^2 - 9n +15)/(n-7)$. 
\end{theorem}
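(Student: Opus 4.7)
The natural tool here is the Hatcher--Oertel algorithm for enumerating essential surfaces in Montesinos knot exteriors, since every incompressible surface in $S^3 \setminus K_n$ arises (up to isotopy) from an admissible edge-path system in a certain diagram $\mathcal{D}$ in the plane associated to the tangle slopes. Recall that $\mathcal{D}$ has a vertex at each rational $p/q$ and edges between Farey neighbours; an edge-path system for $M(\beta_1/\alpha_1, \beta_2/\alpha_2, \beta_3/\alpha_3)$ consists of three edge-paths $\gamma_1, \gamma_2, \gamma_3$ starting at the respective vertices $\beta_i/\alpha_i$ and ending at vertices with matching ``$u$-coordinate''. Each admissible such system yields a boundary slope given by a formula $r = 2\tau$, where $\tau$ is the algebraic count of the twisting along the paths.

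For $K_n = M(-1/2,\,2/5,\,1/n)$ the three starting vertices are $-1/2$, $2/5$, and $1/n$. The key feature is that $1/n$ sits at the end of a long chain $1/1, 1/2, \ldots, 1/n$ of Farey neighbours on the real axis, so an edge-path starting at $1/n$ can ascend this chain various distances before branching off toward $0$ or $\infty$. My approach is to exhibit two specific admissible systems $\Gamma$ and $\Gamma'$ whose only essential difference lies in this third edge-path: in $\Gamma$ the path from $1/n$ proceeds up the chain in a manner that leaves $n$ in the denominator of the resulting slope, whereas in $\Gamma'$ the path branches off the chain upon reaching $1/7$, leaving $n-7$ in the denominator. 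These two denominators are precisely those appearing in the claimed slopes $2(n-1)^2/n$ and $2(n^2 - 9n + 15)/(n-7)$; observe moreover that the decompositions $2(n-1)^2/n = 2n - 4 + 2/n$ and $2(n^2 - 9n + 15)/(n-7) = 2n - 4 + 2/(n-7)$ isolate a common ``main part'' $2n-4$ consistent with this picture.

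The main technical obstacle is the admissibility check for $\Gamma$ and $\Gamma'$: one must verify that the cycle conditions on the twist coordinates at the gluing arcs are satisfied, and that the resulting candidate branched surface carries an incompressible, boundary-incompressible surface, avoiding the exceptional degenerations excluded in the Hatcher--Oertel classification. The parity and size hypothesis (odd $n \ge 11$) is presumably precisely the condition under which both systems survive this check and produce a knot rather than a link. Once admissibility is granted, computing the two slopes is a direct tally of edge contributions via $r = 2\tau$; the asserted slopes then drop out, and the elementary identity $2(n-1)^2/n - 2(n^2 - 9n + 15)/(n-7) = -14/[n(n-7)]$ confirms arbitrarily small differences as $n \to \infty$, yielding the corollary.
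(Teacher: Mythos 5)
You have correctly identified the machinery (Hatcher--Oertel edgepath systems) and the right global shape of the argument (two explicit systems, essentiality check, then a twist computation), but as it stands the proposal has a genuine gap: the decisive content is never supplied. You do not write down the two edgepath systems, you do not verify the admissibility conditions (starting points on the edges $\langle R_i\rangle$\,--\,$\langle R_i\rangle^{\circ}$, minimality, monotonicity, and above all the matching condition that all endpoints lie on one vertical line with $v$-coordinates summing to zero --- this last one is where the denominators $n$ and $n-7$ actually come from), you do not invoke any concrete essentiality criterion, and the slope computation is only asserted ("the asserted slopes then drop out"), with the hypothesis odd $n\ge 11$ justified only by "presumably." In the paper, essentiality is not an amorphous "admissibility check": for the first system it follows because the system is of type I and all last edges have a common sign (a consequence of Hatcher--Oertel's $r$-value criterion), and for the second because one of its edgepaths is constant (Hatcher--Oertel, Proposition 2.1). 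Without naming and applying such criteria, the claim that the candidate surfaces are incompressible and $\partial$-incompressible is unsupported.

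Your heuristic picture is also off in two ways that matter. First, the slope is not $r=2\tau$ for a single system; it is $r=\tau(F)-\tau(F_S)$, so you must also produce a Seifert-surface edgepath system and compute its twist (here $\tau(F_S)=4-2n$, which requires checking the mod-2 edge-type and penultimate-vertex parity conditions). The "common main part $2n-4$" you observed is exactly $-\tau(F_S)$, but your formula as written would miss it. Second, the two systems in the actual proof do not differ only in the third edgepath, and no edgepath "branches off at $1/7$": in both systems the third path is a partial edge of $\langle 0\rangle$\,--\,$\langle 1/n\rangle$ ending at $\langle 1/n\rangle$, and the denominator $n-7$ arises because the second system (whose first edgepath is a constant point on the horizontal edge at $\langle -1/2\rangle$ and whose second is a partial edge of $\langle 1/2\rangle$\,--\,$\langle 2/5\rangle$) forces the common endpoint abscissa $u=(n-1)/(2n-8)$ when solving the matching condition, giving twists $2/n$ and $2/(n-7)$ respectively. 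So the route you sketch is the paper's route in outline, but the proposal would need the explicit systems, the (E1)--(E4) verifications, the two essentiality lemmas, and the Seifert-surface normalization before it constitutes a proof.
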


\begin{proof}[Proof of Corollary]
Consider the sequence of the Montesinos knots $ \{ K_n = M( -1/2 , 2/5 , 1/n) \}$ 
for odd positive integers $n \ge 11$. 
Let $r_n = 2(n-1)^2/n$ and $r'_n = 2(n^2 - 9n +15)/(n-7)$ 
be the pairs of boundary slopes for $K_n$, 
whose existence is guaranteed by the theorem above.
Then their difference is calculated as 
$ r'_n - r_n = 2 (n^2 - 9n +15)/(n-7) - 2(n-1)^2/n  = 2 ( 1/(n-7) - 1/n )$. 
Thus, as $ n \to \infty$, the differences $ | r'_n - r_n |$ converge to $0$. 
\end{proof}

To prove the theorem, we depend on the machinery 
to enumerate the boundary slopes for Montesinos knots, 
which was developed by Hatcher and Oertel in \cite{HO}. 
In the next section, we will briefly recall the machinery. 
Based on this, in Sect.~\ref{sec:3}, we will give a proof of our theorem, and 
in the last section, we will give supplementary data for the surfaces 
with the desired boundary slopes. 

We here note that to find the example, 
the key ingredient was the computer program \cite{D} 
written by Dunfield, which implements the algorithm given in \cite{HO}.

\section{Boundary slopes for Montesinos knots}\label{sec:2}

In this section, after setting up our notation and definition, 
we briefly recall the machinery developed by Hatcher and Oertel in \cite{HO} 
to enumerate the boundary slopes for Montesinos knots. 
See \cite{HO} for a fundamental reference, and 
also see \cite{IMb}, \cite{IMcd}, \cite{IMlb}, \cite{IMcc}, \cite{IMpn} and \cite{{Callahan13}} 
for related results and detailed explanations.

\subsection{Essential surface and Montesinos knot}
We start with recalling basic definitions and notations. 
For example, see the book \cite{R} as a detailed reference. 

Let $K$ be a knot (i.e., an embedded circle) in the 3-sphere $S^3$. 
Denote by $E(K)$ the exterior of $K$ in $S^3$, 
meaning that, $E(K)$ is the complement of 
an open tubular neighborhood of $K$ in $S^3$. 
A compact surface $F$ with non-empty boundary $\partial F$ 
properly embedded in $E(K)$ 
is called \textit{essential} if it is incompressible and boundary-incompressible. 
The \textit{boundary slope} of $F$ is defined as the slope 
(i.e., an isotopy class of a non-trivial unoriented simple closed curve) 
represented by $\partial F$ on the torus $\partial E(K)$. 
Such slopes for a knot in $S^3$ 
are naturally parametrized by rational numbers 
by using a standard meridian-preferred longitude system. 
In particular, when an irreducible fraction $p/q$ 
corresponds to a slope $\gamma$, 
then $\gamma$ minimally intersects 
with the meridian in $|q|$ times and the longitude $|p|$ times. 

A knot in $S^3$ is called a \textit{Montesinos knot} 
if the knots is composed of a number of rational tangles. 
We here omit various properties of Montesinos knots.

\subsection{Hatcher-Oertel's machinery}

We here briefly recall the machinery developed by \cite{HO}, 
which actually gives an algorithm 
to enumerate all the boundary slopes for a given Montesinos knot. 
See \cite{IMb}, \cite{IMcd}, \cite{IMlb}, \cite{IMcc}, \cite{IMpn} and \cite{{Callahan13}} also.

\subsubsection{Edgepath system}

The heart of the Hatcher-Oertel's machinery would be 
using combinatorial objects, called ``edgepath systems'', 
to describe properly embedded surfaces in Montesinos knots exteriors. 
In particular, 
for an edgepath system satisfying certain conditions, 
one can construct an essential surface 
properly embedded in the given Montesinos knot exterior. 
Furthermore, from some combinatorial data of an edgepath system, 
we can compute the boundary slope, the Euler characteristic, and 
the number of boundary components of the surface so obtained.

An \textit{edgepath system} is defined as a finite collection of edgepaths, and 
an \textit{edgepath} is defined as a finite path 
lying on a special kind of an embedded graph on a plane, 
which we call the \textit{diagram}, denoted by $\mathcal{D}$. 
Note that we allow a single point on an edge as an edgepath, and 
regard each edgepath running from right to left.

  \begin{figure}[htb]
   \begin{center}
    \begin{picture}(150,210)
     \put(0,0){\includegraphics{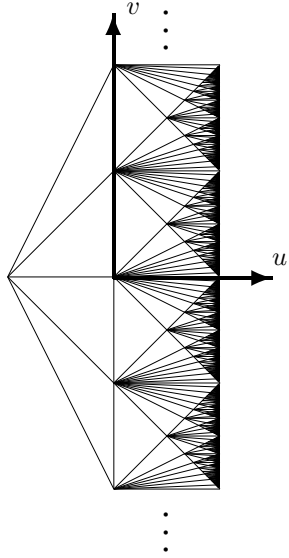}}
     \put(77,185){\rotatebox{90}{\scalebox{1.5}{$\cdots$}}}
     \put(77,-5){\rotatebox{90}{\scalebox{1.5}{$\cdots$}}}
     \put(60,100){\scalebox{2.0}{\vector(1,0){30}}}
     \put(60,100){\scalebox{2.0}{\vector(0,1){50}}}
     \put(120,105){$u$}
     \put(65,200){$v$}
    \end{picture}
   \end{center}
   \caption{The diagram $\mathcal{D}$}
   \label{Fig:Diagram}
  \end{figure}

The diagram $\mathcal{D}$, which is an embedded graph in the $uv$-plane, 
is precisely described as follows. 
For an irreducible fraction $p/q$, 
a vertex of $\mathcal{D}$ is set as: 
either a point $\angleb{p/q}=((q-1)/q,p/q)$,
a point $\circleb{p/q}=(1,p/q)$, or a point $\angleb{\infty}=(-1,0)$. 
If a pair of vertices $\angleb{p/q}$ and $\angleb{r/s}$ satisfy the condition $|ps-qr|=1$, 
(regarding $\infty$ as $1/0$), 
then the two vertices are connected by an edge. 
Such an edge is denoted by $\angleb{p/q} \ - \ \angleb{r/s}$.

\subsubsection{Rational point}

An edge path consisting of a single point is called a \textit{constant edgepath}. 
Also we can consider edgepaths ended at some interior points on edges. 
Such a point in the interior of an edge 
is expressed as a rational point on an edge as follows. 

Let $e$ be an edge $\angleb{p/q} \ - \ \angleb{r/s}$ with $q\ge 1$ and $s\ge 1$, 
and $k$ and $l$ positive integers. 
Then, $\frac{k}{k+l}\angleb{p/q}+\frac{l}{k+l}\angleb{r/s}$ 
denotes a point on $e$ with $uv$-coordinates 
\begin{equation}\label{uvcood1}
\frac{kq}{kq+ls} \left( \frac{q-1}{q} , \frac{p}{q} \right) + \frac{ls}{kq+ls} \left( \frac{s-1}{s} , \frac{r}{s} \right) =
\left(  \frac{k q+ls -( k+l )}{kq+ls} , \frac{kp + lr}{kq+ls} \right)
\end{equation}

Also, $\frac{k}{k+l}\angleb{p/q}+\frac{l}{k+l}\circleb{p/q}$ 
denotes a point on $e$ with $uv$-coordinates 
\begin{equation}\label{uvcood2}
\frac{k}{k+l} \left( \frac{q-1}{q} , \frac{p}{q} \right) + \frac{l}{k+l} \left( 1 , \frac{p}{q} \right) =
\left(  \frac{kq +lq -k}{kq+lq} , \frac{p}{q} \right)
\end{equation}

These might seem to be somewhat unnatural formula, 
but, they actually reflects certain natural information of the surface so constructed. 
See \cite{HO} or \cite{IMb} for example.

\subsubsection{From edgepath system to surface}

For an edgepath system, under certain conditions, 
one can obtain a properly embedded surface in a Montesinos knot exterior. 
The following lemma gives such conditions. 
See \cite[Section 1, Proposition 1.1]{HO} and \cite[Section 2]{IMb} for a proof for example.

\begin{lemma}\label{CandidateSurface}
For a Montesinos knot $K=M(R_1, \cdots , R_N)$, 
a properly embedded surface in the exterior of $K$ is constructed 
corresponding to an edgepath system satisfying the following conditions: 

For an edgepath system $\Gamma=(\gamma_1, \cdots ,\gamma_N)$, 
\begin{itemize}
\item[(E1)] the starting point of $\gamma_i$ lies on the edge $\angleb{R_i}$\,--\,$\circleb{R_i}$, and if this starting point is not the vertex $\angleb{R_i}$, then the edgepath $\gamma_i$ is constant 
on the horizontal edge $\angleb{R_i} \ - \ \circleb{R_i}$,
\item[(E2)] each $\gamma_i$ is minimal, i.e., it never stops and retraces itself, nor does it ever go along two sides of the same triangle of $\mathcal{D}$ in succession,
\item[(E3)] all the ending points of the $\gamma_i$'s are lying on one vertical line in $\mathcal{D}$, and whose vertical coordinates add up to zero, and 
\item[(E4)] each $\gamma_i$ proceeds monotonically from right to left; ``monotonically'' means in a weak sense that motion along vertical edges is permitted. 
\end{itemize}
Conversely, 
every essential surface in a Montesinos knot exterior is expressed by an edgepath system in such a way. 
\end{lemma}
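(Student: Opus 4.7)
The plan is to prove both directions of the lemma via the Hatcher--Oertel decomposition of the knot exterior $E(K)$. I would first cut $E(K)$ along a collection of 2-spheres $S_1, \dots, S_N$ so that each $S_i$ bounds a 3-ball containing a single rational tangle $R_i$, the complementary region being a central ``puncture ball'' in which the tangles are joined. This decomposes $E(K)$ into rational tangle exteriors $B_i$ (each a 3-ball minus two arcs) plus a central piece. The diagram $\mathcal{D}$ then encodes the classification of properly embedded surfaces in each $B_i$: a vertex $\angleb{p/q}$ corresponds to a surface whose boundary on the 4-punctured sphere $\partial B_i$ has slope $p/q$ with a specific sheet count; a circled vertex $\circleb{p/q}$ corresponds to a surface made of sheets parallel to the tangle strands; and an edge $\angleb{p/q}$--$\angleb{r/s}$ with $|ps-qr|=1$ corresponds to an elementary saddle modification between surfaces of Farey-adjacent slopes.

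For the forward direction, an edgepath system $\Gamma=(\gamma_1,\dots,\gamma_N)$ satisfying (E1)--(E4) is converted into a surface piece by piece: in each ball $B_i$, $\gamma_i$ is read as a sequence of saddle operations, starting from the surface determined by (E1) (so as to match the tangle $R_i$) and applying one saddle per edge traversed. Monotonicity (E4) is what allows these saddles to be organized coherently across $B_i$, while minimality (E2) prevents cancellations or the introduction of compressible disks. The pieces must then glue along the spheres $S_i$; the common-vertical-line clause of (E3) matches $u$-coordinates (equivalently, sheet counts) across the boundary of the central piece, and the sum-to-zero condition on $v$-coordinates ensures that the boundary slopes of the individual pieces fit together coherently in the central puncture ball, so that the assembled surface is properly embedded and in fact essential.

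For the converse, I would begin with an essential surface $F \subset E(K)$ and isotope it so that $F \cap S_i$ is a minimal transverse collection of circles. Incompressibility and boundary-incompressibility of $F$, combined with innermost-disk and outermost-arc arguments inside each $B_i$, reduce $F \cap B_i$ to a disjoint union of elementary pieces that can be assembled into a chain of Farey-adjacent saddle transitions, i.e., an edgepath $\gamma_i$ on $\mathcal{D}$. Matching of slopes across each $S_i$ forces the endpoint alignment demanded by (E3); minimality of intersections forces (E2) and (E4); and the isotopy class of the tangle $R_i$ inside $B_i$ forces the starting-point requirement (E1).

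The main obstacle is the converse. It rests on a detailed classification of essential surfaces in a rational tangle exterior, together with a proof that every incompressibility-driven simplification of such a surface corresponds to a legal move along $\mathcal{D}$. Verifying the precise $uv$-coordinate formulas \eqref{uvcood1} and \eqref{uvcood2} is equally delicate: these asymmetric linear combinations are calibrated to record exactly the meridian and longitude intersection counts of $\partial F$, and they are what make the subsequent boundary slope computation (used, for instance, in the proof of the theorem on $K_n$) work correctly. Both ingredients constitute the technical heart of \cite[Section 1, Proposition 1.1]{HO}, and in a self-contained write-up the bulk of the work would lie there.
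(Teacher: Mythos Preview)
The paper does not give its own proof of this lemma; it simply refers the reader to \cite[Section~1, Proposition~1.1]{HO} and \cite[Section~2]{IMb}. Your sketch follows exactly the Hatcher--Oertel construction those references contain, so the overall route is the right one.

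There is, however, a genuine misreading of the statement in your forward direction. You write that conditions (E1)--(E4) ensure ``the assembled surface is properly embedded and in fact essential,'' and that (E2) ``prevents \dots\ the introduction of compressible disks.'' This is not what the lemma asserts, and it is not true: the forward direction only produces a \emph{properly embedded} surface---what the paper immediately afterward names a \emph{candidate surface}---which may well be compressible. Essentiality is a separate, nontrivial issue handled later by Lemmas~\ref{lemTypeI} and~\ref{lemconst}, which give sufficient combinatorial conditions (common sign of the last edges, or the presence of a constant edgepath) under which a candidate surface is essential. If (E1)--(E4) alone guaranteed essentiality, those lemmas would be superfluous and the proof of the main theorem would not need to invoke them. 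So your outline of the Hatcher--Oertel machinery is fine, but you should drop the essentiality claim from the forward direction and recognize that the asymmetry between the two directions---candidate surface one way, essential surface the other---is deliberate.
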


A surface represented by an edgepath system as in Lemma \ref{CandidateSurface} 
is called a \textit{candidate surface}.

\subsubsection{Sign, length and twist}

The boundary slope of a candidate surface 
corresponding to an edgepath system is calculated as follows.

We first define the \textit{sign} of an edge $e$ in $\mathcal{D}$ 
not connected to $\angleb{ \infty }$, which we denote by $\sigma(e)$. 
Recall that we are assuming that 
an edge in $\mathcal{D}$ is oriented from right to left. 
Then the edge is said to be \textit{increasing} (respectively \textit{decreasing}) 
if the $v$-coordinate increases (resp. decreases) 
as a point moves along the edge in that direction.
According to whether $e$ is increasing or decreasing, 
we assign $+1$ or $-1$ to an edge $e$ as the sign $\sigma(e)$ respectively. 

We next define the \textit{length} of an edge $e$ in $\mathcal{D}$, denoted by $|e|$. 
The length of a complete edge is set to be $1$. 
The length of an edge such as 
$(\frac{k}{k+l}\angleb{p/q}+\frac{l}{k+l}\angleb{r/s}) \ - \ \angleb{r/s}$ 
is set to be $\frac{k}{k+l}$. 

Now, for a non-constant edge $e$ on $\mathcal{D}$ in the region with $u>0$, 
we define the \textit{twist} $\tau(e)$ of $e$ as $-2\,\sigma(e)~|e| $. 
For the other kind of edges, its twist is set to be $0$. 
Taking sum of all the edges included, 
we define the twist $\tau(\Gamma)$ of an edgepath system $\Gamma$, and 
equivalently, the twist $\tau(F)$ of a candidate surface $F$ corresponding to an edgepath system. 
In general, plural candidate surfaces correspond to an edgepath system, 
but the twist is well-defined.  See \cite{HO} or \cite{IMb}. 

Under these settings, as shown in \cite{HO}, or as explained in \cite{IMb}, 
we can calculate the boundary slope of a candidate surface as follows. 

\begin{lemma}\label{lemBS}
Let $F$ be a candidate surface for a Montesinos knot. 
The boundary slope $r$ of $F$ is calculated as $r = \tau(F) - \tau(F_S)$, 
where $F_S$ denotes a Seifert surface in the list of candidate surfaces of the knot. 
\end{lemma}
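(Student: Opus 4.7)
The plan is to follow Hatcher and Oertel's original derivation, extracting the boundary slope from the combinatorics of an edgepath system via a local-to-global analysis on each rational tangle. First I would recall the structural picture: for $K = M(R_1, \ldots, R_N)$, the exterior $E(K)$ decomposes along four-punctured spheres into the rational-tangle complements $T_i$, and any candidate surface $F$ meets each $T_i$ in a collection of disks and saddles whose combinatorial pattern is encoded by the edgepath $\gamma_i$. Traversing $\gamma_i$ from right to left in $\mathcal{D}$ corresponds to successive combinatorial modifications of this pattern as one moves from the tangle boundary toward its interior.

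Next I would analyze how the boundary curve $\partial F \subset \partial E(K)$ winds with respect to an auxiliary framing determined by the construction. Each non-constant edge $e$ in the region $u>0$ corresponds to a portion of $F$ that is forced to cross a twisted subregion of the tangle, and its local contribution to the longitudinal winding is exactly $-2\sigma(e)|e|$: the length $|e|$ captures the proportion of that passage actually effected along the edge, the sign $\sigma(e)$ records which way the twist runs, and the factor $2$ arises because each full twist of a rational tangle forces the two boundary strands of $F$ to cross twice. Edges with $u \le 0$, together with pieces of $F$ associated with the horizontal edges $\angleb{R_i}$--$\circleb{R_i}$, contribute no twist, matching the zero values assigned to them. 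Summing over all edges in all $\gamma_i$ then yields $\tau(F)$ as the total longitudinal winding of $\partial F$ measured against this auxiliary framing.

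Finally, I would convert the twist count into an honest boundary slope. The auxiliary framing is not the standard meridian--longitude framing on $\partial E(K)$, so $\tau(F)$ differs from the true slope by a fixed offset that depends only on $K$. To pin down this offset, I would apply the same formula to a Seifert surface $F_S$ appearing in the list of candidate surfaces: since the Seifert longitude has slope $0$, the offset is exactly $\tau(F_S)$, and subtracting yields the stated formula $r = \tau(F) - \tau(F_S)$. The main obstacle is the local verification that each edge contributes precisely $-2\sigma(e)|e|$; this requires unpacking the explicit $uv$-coordinates in (\ref{uvcood1}) and (\ref{uvcood2}) and tracing through the saddle-and-disk geometry of the corresponding surface pieces inside a rational tangle. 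Once this local identity is in hand, additivity over the edgepath system and the normalization against $F_S$ give the lemma immediately.
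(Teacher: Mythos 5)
Your outline reconstructs exactly the justification the paper relies on: the paper gives no proof of this lemma, deferring entirely to the citations \cite{HO} and \cite{IMb}, and your sketch is the Hatcher--Oertel argument those references contain --- decompose $E(K)$ along four-punctured spheres into tangle pieces, read off a per-edge twist contribution $-2\sigma(e)|e|$ from the saddle construction for edges in the region $u>0$, and fix the framing ambiguity by normalizing against a Seifert surface, whose boundary slope is $0$, so that $r=\tau(F)-\tau(F_S)$. The one step you explicitly defer (the local verification of the $-2\sigma(e)|e|$ contribution per sheet) is precisely the computation carried out in \cite{HO}, so your route coincides with the paper's intended one rather than offering an alternative.
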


\subsubsection{Incompressibility}

In the next section, to prove our theorem, 
we will give a pair of edgepath systems 
satisfying all the conditions in Lemma \ref{CandidateSurface}. 
Then we need to check that the surfaces so obtained are actually essential. 
To do this, we will use two lemmas, essentially obtained in \cite{HO}.

To state the lemmas, we need some more definitions. 
Assume that the $u$-coordinate of the left endpoint of an edgepath is $u_0$.
Then the edgepath is said to be of \textit{type I}, \textit{type II} or \textit{type III} 
if $u_0$ satisfies $u_0>0$, $u_0=0$ or $u_0<0$, respectively. 


The next lemma is a version of \cite[Lemma 2.1(1)]{IMlb}. 

\begin{lemma}\label{lemTypeI}
For a type I edgepath system, 
if all the last edges of the edgepaths in the edgepath system have a common sign, 
then all the candidate surfaces for the edgepath system are essential. 
\end{lemma}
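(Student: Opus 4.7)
The plan is to adapt the incompressibility arguments from Hatcher and Oertel in \cite{HO}, following closely the version already worked out by the author and Mizushima in \cite{IMlb}. Since the lemma is explicitly labeled as a ``version of'' the earlier one, I expect the proof to largely parallel that one, with only minor adjustments for the present formulation.

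First, I would recall the structure of a type I candidate surface. Because the common terminal $u$-coordinate $u_0$ is strictly positive, each edgepath ends in the interior of some edge of $\mathcal{D}$, and none of the ``augmented'' vertical features that appear in type II or type III cases are present. Consequently the candidate surface $F$ decomposes across the tangle spheres into fractional pieces inside each rational tangle ball, glued together by a collection of saddles whose combinatorics are determined by the last edges of the edgepaths.

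Next, assuming towards a contradiction that some candidate $F$ carries a compressing or boundary-compressing disk $D$, one places $D$ in the standard position of \cite{HO} with respect to the tangle spheres. The intersection of $D$ with each tangle ball then records a local modification of the corresponding edgepath, and isotoping $D$ through $F$ corresponds to altering the last edge of at least one edgepath. Because the endpoints of all edgepaths must remain on a common vertical line (condition (E3) of Lemma \ref{CandidateSurface}), and because their $v$-coordinates must still sum to zero, any such modification must be balanced across the different tangles, which translates into a cancellation among the twist contributions $\tau(e)=-2\,\sigma(e)\,|e|$ of the affected last edges.

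The crucial step is to exploit the common-sign hypothesis to show this cancellation is impossible. Since $\sigma(e)$ is the same for every last edge and $|e|>0$, the twist changes coming from all the affected edgepaths have the same sign, so their algebraic sum is nonzero unless every individual change is trivial. This forces $D$ to be inessential, yielding the desired contradiction. The main obstacle, I expect, is the careful bookkeeping of how the disk $D$ actually modifies the edgepath data in both the compressing and boundary-compressing cases, and verifying that every such modification genuinely involves the last edges in the sign-sensitive manner described, rather than being absorbed by interior edges or by the endpoint condition in some unexpected way. This is precisely the combinatorics treated in detail in \cite{HO} and \cite{IMlb}, so I would lean heavily on those references for the local analysis inside each tangle ball.
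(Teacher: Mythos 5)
There is a genuine gap at what you yourself call the ``crucial step.'' Your mechanism --- that a compressing or boundary-compressing disk $D$ ``modifies the last edge of at least one edgepath,'' and that condition (E3) then forces a sign-balanced cancellation among the twist contributions $\tau(e)=-2\,\sigma(e)\,|e|$ --- is not an argument that appears in \cite{HO} or \cite{IMlb}, and as stated it does not make sense: the candidate surface and its edgepath system are fixed data, a compressing disk does not act on them by ``altering last edges,'' and (E3) is a constraint on the $v$-coordinates of the endpoints of the edgepaths themselves, not a conservation law that an isotopy of $D$ through $F$ must respect. Nothing in your sketch explains why the existence of $D$ would produce competing twist contributions of opposite sign, so the contradiction you derive from the common-sign hypothesis is asserted rather than proved.

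The actual route, which the paper takes, is much more direct and rests on a specific criterion from Hatcher--Oertel: by Corollary 2.4 and Propositions 2.6, 2.7 and 2.8(a) of \cite{HO}, a type I edgepath system can only represent an inessential candidate surface if its cycle of final $r$-values contains both positive and negative terms; and the sign of the final $r$-value of an edgepath is determined by (and equivalent to) the sign of its last edge (see \cite[Subsection 2.3]{IMlb}). Hence a common sign on all last edges immediately rules out compressibility. Your intuition that ``saddles twisting the same way cannot be compressed'' is pointing in the right direction, but to turn it into a proof you would have to reconstruct exactly this $r$-value analysis (how compressing disks interact with the saddle structure in the outermost region), which is the content of the cited propositions; your twist-cancellation surrogate for it does not substitute for that analysis.
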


\begin{proof}
By Corollary 2.4 and Propositions 2.6, 2.7 and 2.8(a) in \cite{HO}, 
if a type I edgepath system 
represents an inessential surface, 
then its cycle of final $r$-values must include both positive and negative terms. 
We here omit the precise definition of $r$-values, 
but this is equivalent to that 
there exist a pair of last edges of edgepaths in the edgepath system having different signs. 
See \cite[Subsection 2.3]{IMlb} for details. 
\end{proof}

The next lemma is just the Proposition 2.1 in \cite{HO}.

\begin{lemma}\label{lemconst}
For a type I edgepath system, 
if at least one of its edgepaths is constant, 
then all the candidate surfaces for the edgepath system are essential. 
\end{lemma}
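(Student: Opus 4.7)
The plan is to appeal to Proposition 2.1 of Hatcher-Oertel \cite{HO} directly: the present statement is essentially a transcription of theirs, adapted to the notation set up in Section \ref{sec:2}, and there is nothing to gain from reproving it. I would write the proof as a one-line citation.

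For the sake of orientation, the mechanism behind the lemma parallels the one used in Lemma \ref{lemTypeI}. Hatcher-Oertel detect inessential type I candidate surfaces by examining the cyclic sequence of $r$-values attached to the final (leftmost) edges of the edgepaths in the system; by Corollary 2.4 together with Propositions 2.6--2.8 of \cite{HO}, any inessential candidate surface must produce a cycle containing both positive and negative entries. A constant edgepath, however, has no non-trivial final edge to which a signed $r$-value can be attached, so the corresponding slot in the cycle is degenerate and the sign-change obstruction cannot be realized. Hence every candidate surface arising from such an edgepath system must be essential.

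The main obstacle, were one to attempt a self-contained proof, is twofold: first, one would have to reproduce Hatcher-Oertel's definition of $r$-values and the accompanying case analysis that produces the sign-change obstruction (several pages of \cite{HO}); second, one would need to verify the geometric fact underlying the degeneracy, namely that in the rational tangle corresponding to a constant edgepath the portion of the candidate surface is just a stack of product disks parallel to the tangle boundary, which leaves no room for a compressing or boundary-compressing disk to develop. Because the lemma will only be used as a black-box tool in Section \ref{sec:3}, the direct appeal to \cite{HO} is the cleanest and most honest way to present the argument.
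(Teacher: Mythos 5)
Your proposal matches the paper exactly: the paper offers no independent argument and simply notes that this lemma \emph{is} Proposition 2.1 of \cite{HO}, which is precisely your one-line citation. The extra heuristic discussion of $r$-values is harmless but unnecessary, since the citation alone is the intended proof.
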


\section{Proof of Theorem}\label{sec:3}

We first prepare an edgepath system representing a Seifert surface, and 
calculate its twist. 

\begin{lemma}\label{lemSs}
The edgepath system $\Gamma_S$ given by 
$$
\begin{array}{l}
\delta_1 : 
\angleb{\infty} \ - \ \angleb{-1} \ - \ \angleb{-\dfrac{1}{2}} \\[10pt]
\delta_2 : 
\angleb{\infty} \ - \ \angleb{0} \ - \ \angleb{\dfrac{1}{2}} \ - \ \angleb{\dfrac{2}{5}} \\[10pt]
\delta_3 : 
\angleb{\infty} \ - \ \angleb{1} \ - \ \angleb{\dfrac{1}{2}} \ - \ 
\cdots \ - \ \angleb{\dfrac{1}{n-1}} \ - \ \angleb{\dfrac{1}{n}}
\end{array}
$$
represents a Seifert surface of 
the Montesinos knot $K_n = M( -1/2 , 2/5 , 1/n)$, and its twist is equal to $4-2n$. 
\end{lemma}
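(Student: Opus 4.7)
The plan is to proceed in three stages. First, I would verify that $\Gamma_S$ satisfies the four conditions (E1)--(E4) of Lemma \ref{CandidateSurface}, so that it represents a candidate surface. Condition (E1) is immediate because each $\delta_i$ starts at the vertex $\angleb{R_i}$, making the horizontal-edge clause vacuous. For (E3), every edgepath ends at $\angleb{\infty}=(-1,0)$, so the endpoints share the vertical line $u=-1$ and their $v$-coordinates trivially sum to $0$. For (E2) and (E4), I would check that successive vertices $\angleb{p/q}, \angleb{r/s}$ in each $\delta_i$ satisfy $|ps-qr|=1$ (so these are genuine edges of $\mathcal{D}$), that the $u$-coordinate strictly decreases at each step, and that no pair of consecutive edges forms two sides of a single triangle in $\mathcal{D}$; this last property is clear from the monotone Farey-like structure of the sequences, in particular $1, 1/2, 1/3, \ldots, 1/n$ in $\delta_3$.

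Second, I would identify the resulting candidate surface as a Seifert surface of $K_n$. Each $\delta_i$ follows the standard Farey path from $\angleb{R_i}$ to $\angleb{\infty}$, with orientation choices corresponding to Seifert's algorithm applied to the standard Montesinos diagram; this is the edgepath system that Hatcher and Oertel associate to the Seifert surface in \cite{HO}. Hence $\Gamma_S$ may legitimately serve as the reference $F_S$ in Lemma \ref{lemBS}.

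Third, I would compute $\tau(\Gamma_S)$ edge-by-edge. Every edge incident to $\angleb{\infty}$ lies in the region $u\le 0$ and contributes $0$. Among the remaining edges, each is complete, of length $1$, so its twist is $\pm 2$ depending on whether it is decreasing or increasing when traversed right-to-left. The unique non-trivial edge $\angleb{-1}-\angleb{-1/2}$ of $\delta_1$ is decreasing (its $v$-coordinate drops from $-1/2$ to $-1$), contributing $+2$. In $\delta_2$ the two non-trivial edges $\angleb{0}-\angleb{1/2}$ and $\angleb{1/2}-\angleb{2/5}$ have opposite signs and cancel. In $\delta_3$ every one of the $n-1$ edges $\angleb{1/(k+1)}-\angleb{1/k}$ is increasing right-to-left (its $v$-coordinate rises from $1/(k+1)$ to $1/k$), and so contributes $-2$, for a total of $-2(n-1)$. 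Combining gives $\tau(\Gamma_S)=2+0-2(n-1)=4-2n$.

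The only step that requires genuine care is the identification in stage two: one must match $\Gamma_S$ against the Hatcher-Oertel description of a Seifert surface, rather than merely producing some orientable candidate surface of boundary slope $0$. Stages one and three are largely bookkeeping, provided each edge is oriented consistently from right to left before reading off its sign.
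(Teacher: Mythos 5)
Your stages one and three are fine: the (E1)--(E4) checks are routine, and your edge-by-edge twist count (one decreasing edge in $\delta_1$ contributing $+2$, a cancelling increasing/decreasing pair in $\delta_2$, and $n-1$ increasing edges in $\delta_3$ contributing $-2(n-1)$) is exactly the paper's tally of $n$ increasing and $2$ decreasing edges, giving $\tau(\Gamma_S)=4-2n$.

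The genuine gap is stage two, which you yourself flag as the step requiring care but then do not carry out. Saying that each $\delta_i$ ``follows the standard Farey path'' and corresponds to ``Seifert's algorithm applied to the standard Montesinos diagram'' is an assertion of the conclusion, not a proof; identifying which edgepath system represents a Seifert surface is precisely the content of the lemma. The paper's proof consists of verifying the two concrete criteria from Hatcher--Oertel's subsection on computing boundary slopes: (i) after reducing numerators and denominators mod $2$, each single edgepath uses edges of only one of the three types $\angleb{\frac{odd}{even}}$\,--\,$\angleb{\frac{even}{odd}}$, $\angleb{\frac{even}{odd}}$\,--\,$\angleb{\frac{odd}{odd}}$, $\angleb{\frac{odd}{even}}$\,--\,$\angleb{\frac{odd}{odd}}$ (here $\delta_1,\delta_3$ use only the third type and $\delta_2$ only the first), and (ii) the number of edgepaths whose penultimate vertex is an odd integer is even (here $\delta_1$ and $\delta_3$, with $\angleb{-1}$ and $\angleb{1}$). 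This check is not vacuous: there are other minimal, monotone type~III edgepath systems ending at $\angleb{\infty}$ and satisfying (E1)--(E4), for instance replacing $\delta_3$ by $\angleb{\infty}$\,--\,$\angleb{0}$\,--\,$\angleb{1/n}$, which yields a candidate surface with a completely different twist and which fails criterion (i). Without verifying (i) and (ii) for $\Gamma_S$ you have not established that it may serve as the reference surface $F_S$ in Lemma~\ref{lemBS}, and since every boundary-slope computation in the theorem is measured against $\tau(\Gamma_S)$, this omission undermines the final slopes $2(n-1)^2/n$ and $2(n^2-9n+15)/(n-7)$.
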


\begin{proof}
We here check that $\Gamma_S$ satisfies two conditions 
to represent a Seifert surface described in 
\cite[{Section 1, subsection ``Computing $\partial$-Slopes'' }]{HO}. 

For the first condition, 
we take the mod 2 values of $p$ and $q$ in all vertices $\angleb{\frac{p}{q}}$, 
and reduce the edges in $\Gamma_S$ to one of the three types, 
say $\angleb{\frac{odd}{even}} \ - \ \angleb{\frac{even}{odd}}$, 
$\angleb{\frac{even}{odd}} \ - \ \angleb{\frac{odd}{odd}}$, and 
$\angleb{\frac{odd}{even}} \ - \ \angleb{\frac{odd}{odd}}$. 
(Note that we regard $\infty$ as $1/0$.) 
The condition we have to check is that 
each of edgepaths in $\Gamma_S$ uses only one of the three types. 
Then we see that 
$\delta_1$ (respectively, $\delta_2$, $\delta_3$) 
uses only the edges of type $\angleb{\frac{odd}{even}} \ - \ \angleb{\frac{odd}{odd}}$ 
(resp. $\angleb{\frac{odd}{even}} \ - \ \angleb{\frac{even}{odd}}$, 
$\angleb{\frac{odd}{even}} \ - \ \angleb{\frac{odd}{odd}}$). 
It follows that $\Gamma_S$ satisfies the first condition. 

The second condition which we have to check is that 
$\Gamma_S$ contains even number of edgepaths with penultimate vertices corresponding to odd-integers. 
Here, for type III edgepaths, penultimate vertices mean the vertices on the vertical $v$-axis. 
Actually we see that 
$\delta_1$ and $\delta_3$ have the penultimate vertices $\angleb{-1}$ and $\angleb{1}$, 
which correspond to odd integers, 
while $\delta_2$ has the penultimate vertex $\angleb{0}$, which does an even integers. 
It follows that $\Gamma_S$ satisfies the second condition. 

To calculate the twists, it suffices to count 
the number of increasing/decreasing edges in $\Gamma_S$. 
Actually we see that the number of increasing (respectively decreasing) edges in $\Gamma_S$ 
is just $n$ (resp. 2). 
Thus the twist $\tau(\Gamma_S) = -2 (n-2) = 4-2n$. 
\end{proof}

Now let us give a proof of the main theorem. 

\begin{proof}[Proof of Theorem]
Let $K_n$ be the Montesinos knot $M( -1/2 , 2/5 , 1/n)$ with an odd positive integer $n \ge 11$. 

We give a pair of edgepath systems for $K_n$, and 
show that they represent essential surfaces properly embedded in $E(K_n)$ with 
boundary slopes $2(n-1)^2/n$ and $2(n^2 - 9n +15)/(n-7)$. 

The first edgepath system $\Gamma = ( \gamma_1 , \gamma_2 , \gamma_3 )$ is 
given as follows. 
$$
\begin{array}{l}
\gamma_1 : 
\left( \dfrac{1}{n} \angleb{-1} + \dfrac{n-1}{n} \angleb{-\dfrac{1}{2}} \right) 
\ - \ \angleb{-\dfrac{1}{2}} \\[10pt]
\gamma_2 : 
\left( \dfrac{1}{n} \angleb{0} + \dfrac{n-1}{n} \angleb{\dfrac{1}{2}} \right) 
\ - \ \angleb{\dfrac{1}{2}} \ - \ \angleb{\dfrac{2}{5}} \\[10pt]
\gamma_3 : 
\left( \dfrac{n-1}{n} \angleb{0} + \dfrac{1}{n} \angleb{\dfrac{1}{n}} \right) 
\ - \ \angleb{\dfrac{1}{n}} 
\end{array}
$$
This clearly shows that $\Gamma$ satisfies (E1), (E2) and (E4) in Lemma \ref{CandidateSurface}. 

The $uv$-coordinates of their endpoints are calculated by \eqref{uvcood1} as follows. 
$$
\gamma_1 : 
\left( \dfrac{n-1}{2n-1} , \dfrac{-n}{2n-1} \right) , \qquad 
\gamma_2 : 
\left( \dfrac{n-1}{2n-1} , \dfrac{n-1}{2n-1} \right) , \qquad 
\gamma_3 : 
\left( \dfrac{n-1}{2n-1} , \dfrac{1}{2n-1} \right) 
$$

From these, we see that the condition (E3) in Lemma \ref{CandidateSurface} 
is satisfied for $\Gamma$. 
Thus we have a candidate surface, say $F$, for $\Gamma$ in the exterior $E(K_n)$. 

Since $\Gamma$ is of type I and all the last edges of edgepaths have a common sign, 
by Lemma \ref{lemTypeI}, the surface $F$ is essential. 

Now let us calculate the boundary slope of $F$. 
By setting 
$$
\begin{array}{l}
e_1 : 
\left( \dfrac{1}{n} \angleb{-1} + \dfrac{n-1}{n} \angleb{-\dfrac{1}{2}} \right) 
\ - \ \angleb{-\dfrac{1}{2}} \\[10pt]
e_{2,1} : 
\angleb{\dfrac{1}{2}} \ - \ \angleb{-\dfrac{2}{5}} \\[10pt]
e_{2,2} : 
\left( \dfrac{1}{n} \angleb{0} + \dfrac{n-1}{n} \angleb{-\dfrac{1}{2}} \right) 
\ - \ \angleb{\dfrac{1}{2}}\\[10pt]
e_3 : 
\left( \dfrac{n-1}{n} \angleb{0} + \dfrac{1}{n} \angleb{\dfrac{1}{n}} \right) 
\ - \ \angleb{\dfrac{1}{n}} 
\end{array}
$$
the twists of $F$ is obtained as follows. 
$$
 -2\, ( \sigma(e_1)~|e_1| + \sigma(e_{2,1})~|e_{2,1}| + \sigma(e_{2,2})~|e_{2,2}| + \sigma(e_3)~|e_3| )
=
 -2\, \left( -\frac{1}{n} + 1 - \frac{1}{n} - \frac{n-1}{n} \right)
 =\frac{2}{n}
$$

Thus the boundary slope $r$ of $F$ is calculated as 
$$
r = \frac{2}{n} - 2(2-n) = \frac{2 (n-1)^2}{n}
$$
by Lemmas \ref{lemBS} and \ref{lemSs}. 

\bigskip

The second edgepath system 
$\Gamma' = ( \gamma'_1 , \gamma'_2 , \gamma'_3 )$ is given as the following. 
$$
\begin{array}{l}
\gamma'_1 : 
\dfrac{n-7}{n-4} \angleb{-\dfrac{1}{2}} + \dfrac{3}{n-4} \circleb{-\dfrac{1}{2}} \\[10pt]
\gamma'_2 : 
\left( \dfrac{n-9}{n-7} \angleb{\dfrac{1}{2}} + \dfrac{2}{n-7} \angleb{\dfrac{2}{5}} \right) 
\ - \ \angleb{\dfrac{2}{5}} \\[10pt]
\gamma'_3 : 
\left( \dfrac{n-8}{n-7} \angleb{0} + \dfrac{1}{n-7} \angleb{\dfrac{1}{n}} \right) 
\ - \ \angleb{\dfrac{1}{n}} 
\end{array}
$$
This clearly shows that $\Gamma'$ satisfies (E1), (E2) and (E4) in Lemma \ref{CandidateSurface}. 

The $uv$-coordinates of endpoints are calculated by \eqref{uvcood1} and \eqref{uvcood2} as follows. 
$$
\gamma'_1 : 
\left( \dfrac{n-1}{2n-8} , - \dfrac{1}{2} \right) , \qquad 
\gamma'_2 : 
\left( \dfrac{n-1}{2n-8} , \dfrac{n-5}{2n-8} \right) , \qquad 
\gamma'_3 : 
\left( \dfrac{n-1}{2n-8} , \dfrac{1}{2n-8} \right) 
$$

From these, we see that the condition (E3) in Lemma \ref{CandidateSurface} 
is satisfied for $\Gamma'$. 
Thus we have a candidate surface, say $F'$, for $\Gamma'$ in the exterior $E(K_n)$. 

Since $\Gamma'$ contains a constant edgepath, 
the surface $F'$ is essential by Lemma \ref{lemconst}. 

Now let us calculate the boundary slope of $F'$. 
Since $\gamma'_1$ is a constant edge path, it has no contribution to twists. 
The edgepaths $\gamma'_2$ and $\gamma'_3$ both consist of only single edges, 
the twists of $F'$ is obtained as follows. 
$$
\tau(F') =
 -2\, ( \sigma(\gamma'_2)~|\gamma'_2| + \sigma(\gamma'_3)~|\gamma'_3| )
=
 -2\, \left( \frac{n-9}{n-7}  - \frac{n-8}{n-7}  \right)
 =\frac{2}{n-7}
$$

Thus the boundary slope $r'$ of $F'$ is calculated as 
$$
r' = \frac{2}{n-7} - 2(2-n) = \frac{2 (n^2 - 9n +15)}{n-7} 
$$
by Lemmas \ref{lemBS} and \ref{lemSs}. 

This completes the proof of the theorem. 
\end{proof}

\begin{remark}\label{rmk1}
The essential surfaces obtained in the proof above 
were essentially treated in \cite[4.1.3. (7) and (8)]{IMb}. 
\end{remark}

\section{Denominators and Euler characteristics}\label{sec:4}

In this section, as an addendum, we include some calculations about the pair of surfaces we found. 

\subsection{The numbers of sheets and the numbers of boundary components}

If an essential surface meets a small meridional circle of a Montesinos knot minimally in $m$ points, 
then the {\em number of sheets} of the surface is defined as $m$. 
When we construct a surface from an edgepath system,
the number of sheets of the surface denoted by $\sharp s$ is determined as follows.

When the last edge of an edgepath in an edgepath system 
is a partial edge of length $k/(k+l)$ with the fraction $k/(k+l)$ irreducible, 
then $\sharp s$ has to be a multiple of $k+l$.
When an edgepath in an edgepath system is a constant edgepath 
such as $\frac{k}{k+l} \angleb{p/q} \ - \ \frac{l}{k+l} \circleb{p/q}$, 
then $\sharp s$ has to be a multiple of $k$. 
Consequently $\sharp s$ is determined as the least common multiple of these integers. 

In our settings, from this, we immediately obtain the following.

\begin{proposition}
Let $F$ and $F'$ be essential surfaces constructed in the proof of Theorem. 
Then their numbers of sheets are $n$ and $n-7$, respectively. \qed
\end{proposition}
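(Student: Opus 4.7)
The plan is to read off the divisibility constraints on $\sharp s$ directly from the edgepath systems $\Gamma$ and $\Gamma'$ using the prescription stated just above the proposition: a non-constant edgepath whose last (leftmost) edge is partial of length $k/(k+l)$, with the fraction in lowest terms, requires $(k+l)\mid\sharp s$, while a constant edgepath of the form $\frac{k}{k+l}\angleb{p/q}+\frac{l}{k+l}\circleb{p/q}$, in lowest terms, requires $k\mid\sharp s$. The number of sheets is then the least common multiple over the three edgepaths of the system.

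For $F$ corresponding to $\Gamma$, a direct inspection shows that each of $\gamma_1,\gamma_2,\gamma_3$ is non-constant and ends in a partial edge of length either $1/n$ (the case of $\gamma_1$ and $\gamma_2$, with $k=1$, $l=n-1$) or $(n-1)/n$ (the case of $\gamma_3$, with $k=n-1$, $l=1$). Since $\gcd(1,n)=\gcd(n-1,n)=1$, each fraction is already irreducible, and each contributes the constraint $n\mid\sharp s$. Hence $\sharp s=n$, as claimed.

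For $F'$ corresponding to $\Gamma'$ the analysis is slightly more subtle because some of the apparent fractions are not in lowest terms. The partial edge ending $\gamma'_3$ has length $(n-8)/(n-7)$, which is irreducible (consecutive integers are coprime), giving $(n-7)\mid\sharp s$. The partial edge ending $\gamma'_2$ has length $(n-9)/(n-7)$, but since $n$ is odd both numerator and denominator are even, so after reducing this becomes $\bigl((n-9)/2\bigr)/\bigl((n-7)/2\bigr)$, contributing only $\frac{n-7}{2}\mid\sharp s$. The constant edgepath $\gamma'_1=\frac{n-7}{n-4}\angleb{-1/2}+\frac{3}{n-4}\circleb{-1/2}$ contributes $k\mid\sharp s$ where $k=(n-7)/\gcd(n-7,3)\in\{n-7,(n-7)/3\}$, which in either case divides $n-7$. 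Taking the least common multiple of these three divisibility conditions yields $\sharp s=n-7$.

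The only step requiring real care is reducing the two non-irreducible fractions appearing in $\Gamma'$, where the parity of $n$ and the possible factor of $3$ in $\gcd(n-7,3)$ must each be tracked; but neither affects the final LCM, since every constraint turns out to divide $n-7$. Everything else is a mechanical application of the rules stated in the section, so the write-up should be very short.
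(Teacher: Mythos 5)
Your proposal is correct and follows exactly the route the paper intends: the proposition is stated as an immediate consequence of the divisibility rules given just before it, and you apply those rules to each edgepath and take the least common multiple, obtaining $n$ for $F$ and $n-7$ for $F'$. Your extra care in reducing the fractions $\tfrac{n-9}{n-7}$ and $\tfrac{n-7}{n-4}$ (which the paper does not comment on) only weakens the individual divisibility constraints to divisors of $n-7$, so the constraint $(n-7)\mid\sharp s$ coming from $\gamma'_3$ still forces the same final answer.
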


For an essential surface in a knot exterior, 
the number of sheets is equal to the product of 
the number of boundary components and the denominator of the boundary slope. 
Thus, in our setting, we have: 

\begin{corollary}
Let $F$ and $F'$ be essential surfaces constructed in the proof of Theorem. 
Then each of them has a single boundary component. 
In particular, both are non-orientable. \qed
\end{corollary}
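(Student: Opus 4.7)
The plan is to apply the identity recalled just before the statement---that the number of sheets of an essential surface in a knot exterior equals the product of the number of boundary components and the denominator of the boundary slope---to both $F$ and $F'$, using the sheet counts from the Proposition together with the slopes computed in the Theorem. Once a single boundary component is confirmed for each, non-orientability will follow from a general fact about surfaces in knot exteriors in $S^3$.

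For $F$, the verification is immediate. Since $n$ is odd, $\gcd(2,n)=1$, and since consecutive integers are coprime, $\gcd((n-1)^2,n)=1$; so $r_n = 2(n-1)^2/n$ is already written in lowest terms with denominator exactly $n$. Combined with $\sharp s = n$ from the Proposition, this gives $1$ as the number of boundary components of $F$.

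For $F'$, I would read the denominator of $r'_n = 2(n^2-9n+15)/(n-7)$ in the form produced directly by the edgepath calculation---namely $n-7$, matching the twist output $\tau(F')=2/(n-7)$ obtained in the proof of the Theorem---and pair it with the sheet count $\sharp s = n-7$ from the Proposition, again giving a single boundary component. The point requiring care, and the main obstacle I anticipate, is that when $r'_n$ is written as a reduced rational number, the denominator becomes $(n-7)/2$, since $n-7$ is even for odd $n$ while $n^2-9n+15$ is odd. One must therefore verify, referring back to the constructions of $\gamma'_1,\gamma'_2,\gamma'_3$ in Section \ref{sec:3}, that it is the ``natural'' denominator $n-7$ intrinsically produced by the edgepath data that appears in the Hatcher--Oertel sheets formula.

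For the final non-orientability assertion, I would invoke the standard fact that in $S^3$, an orientable essential surface in a knot exterior with a single boundary component must have boundary slope $0$: this follows from the long exact sequence of the pair $(E(K_n),\partial E(K_n))$, which forces the meridional coordinate of $[\partial F]\in H_1(\partial E(K_n))\cong \mathbb{Z}\oplus\mathbb{Z}$ to vanish (equivalently, the surface would be a Seifert surface). Since $r_n$ and $r'_n$ are both strictly positive for $n\geq 11$ (noting $n^2-9n+15>0$ already for $n\geq 7$), neither slope is longitudinal, and so $F$ and $F'$ are both non-orientable.
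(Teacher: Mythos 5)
Your treatment of $F$ and of the final non-orientability claim is fine and matches the paper's (essentially one-line) argument: for $F$ the slope $2(n-1)^2/n$ is already in lowest terms, so the identity ``number of sheets $=$ (number of boundary components) $\times$ (denominator of the boundary slope)'' together with $\sharp s=n$ gives a single boundary component, and your homological argument that an orientable essential surface with boundary in a knot exterior in $S^3$ must have boundary slope $0$ correctly yields non-orientability from $r_n\neq 0$, $r'_n\neq 0$ (a step the paper leaves implicit).

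The gap is in your treatment of $F'$. The identity you invoke is a statement about curves on the peripheral torus: the components of $\partial F'$ are parallel simple closed curves of slope $p/q$ with $p/q$ in lowest terms, and each such curve meets a small meridional circle exactly $|q|$ times, so $\sharp s = c\cdot q$ with $q$ the \emph{reduced} denominator. There is no version of this identity in which an ``unreduced'' or ``natural'' denominator read off from the edgepath data may be substituted, so the verification you defer --- that it is $n-7$ rather than $(n-7)/2$ which ``appears in the sheets formula'' --- cannot succeed as stated. Your arithmetic is correct: for odd $n$ one has $n^2-9n+15=(n-7)(n-2)+1$ odd and $n-7$ even, so $r'_n$ reduces to $(n^2-9n+15)/\bigl((n-7)/2\bigr)$ and its denominator is $(n-7)/2$. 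Combined with the Proposition's value $\sharp s(F')=n-7$ (which cannot simply be halved: the final partial edge of $\gamma'_3$ has length $(n-8)/(n-7)$ with $\gcd(n-8,n-7)=1$, forcing $\sharp s$ to be a multiple of $n-7$), the identity then gives $c=2$ rather than $c=1$. In other words, you have put your finger on a genuine discrepancy with the statement being proved --- the paper's argument implicitly takes the denominator of $r'_n$ to be $n-7$, overlooking the common factor $2$ --- and this cannot be repaired by reinterpreting the denominator; one would instead have to re-examine the realizable sheet number for $\Gamma'$. Note that the non-orientability conclusion is unaffected, since your slope-$\neq 0$ argument works for any number of boundary components.
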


\subsection{Euler characteristics}

As explained in \cite[Section 3, subsection ``Euler characteristics'']{IMb}, 
we have a formula of the Euler characteristic $\chi$ for a type I candidate surface 
corresponding to an edgepath system $\Gamma = ( \gamma_1 , \cdots , \gamma_N)$ as follows. 

\begin{eqnarray}
\frac{-\chi}{\sharp s}
&=&
 \sum_{i=1}^{N}
  \left(
   \left\{
    \begin{array}{l}
     0 \\
     ~~~~~(\textrm{  if $\gamma_i$ is constant }) \\
     |\gamma_{i} |\\
     ~~~~~(\textrm{ otherwise }) \\
    \end{array}
   \right.
  \right)
\label{Eq:Formula:EulerCharTypeI}
\\
&& 
+N_{\mathrm{const}}-N
+\left(
N-2-\sum_{\gamma_i \in \Gamma_{\mathrm{const}} }\frac{1}{q_i}
\right)
\frac{1}{1-u}
,
\nonumber
\end{eqnarray}
where $\sharp s$ denotes the number of sheets, 
$N_{\mathrm{const}}$ the number of the constant edgepaths, 
$\Gamma_{\mathrm{const}}$ 
the set of constant edgepaths in $\Gamma$, 
and $u$ the common $u$-coodinate of the constant edgepaths. 

From this formula, we have the following by direct calculations. 

\begin{proposition}
For essential surfaces $F$ and $F'$ constructed in the proof of Theorem, 
the values $\frac{-\chi}{\sharp s}$ are both equal to 1. 
This implies that their Euler characteristics are $-n$ and $-n+7$ respectively, 
and are equal to the denominators of their boundary slopes. 
\end{proposition}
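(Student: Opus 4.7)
The plan is to apply the Euler characteristic formula~\eqref{Eq:Formula:EulerCharTypeI} separately to the two edgepath systems $\Gamma$ and $\Gamma'$ built in the proof of the theorem; both are type~I systems, so the formula is legitimate. Once $-\chi/\sharp s = 1$ is verified in each case, combining with the previous proposition (which gives $\sharp s(F)=n$ and $\sharp s(F')=n-7$) yields $\chi(F)=-n$ and $\chi(F')=-n+7$. Matching these against the denominators $n$ and $n-7$ of the slopes $2(n-1)^2/n$ and $2(n^2-9n+15)/(n-7)$ coming from the theorem then closes the argument.

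For $F$, I would read off $N=3$, $N_{\mathrm{const}}=0$, the common left $u$-coordinate $u=(n-1)/(2n-1)$, and the edgepath lengths $|\gamma_1|=1/n$, $|\gamma_2|=1+1/n=(n+1)/n$ (noting the complete edge $\angleb{1/2}\,\text{--}\,\angleb{2/5}$ in $\gamma_2$ in addition to its partial edge), and $|\gamma_3|=(n-1)/n$. With no constant edgepaths the coefficient of $1/(1-u)$ reduces to $N-2=1$, while $1/(1-u)=(2n-1)/n$. Substitution into \eqref{Eq:Formula:EulerCharTypeI} gives $(2n+1)/n - 3 + (2n-1)/n$, which I expect to collapse to $1$.

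For $F'$, I would identify $\gamma'_1$ as a constant edgepath based at the vertex $\angleb{-1/2}$, so $q_1=2$ and $N_{\mathrm{const}}=1$. The common $u$-coordinate is $u=(n-1)/(2n-8)$, and the relevant lengths are $|\gamma'_2|=(n-9)/(n-7)$ and $|\gamma'_3|=(n-8)/(n-7)$. The coefficient of $1/(1-u)$ becomes $N-2-1/q_1=1/2$, while $1/(1-u)=(2n-8)/(n-7)$. Substituting yields $(2n-17)/(n-7) - 2 + (n-4)/(n-7) = (3n-21)/(n-7) - 2$, which reduces to $1$ via $3n-21=3(n-7)$.

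The computation is purely mechanical, so the only real hazard is accurate bookkeeping of the input data. I would be most attentive to two easily overlooked points: that $|\gamma_2|=(n+1)/n$ rather than $1/n$ because of its complete edge, and that the constant edgepath $\gamma'_1$ contributes $1/q_1=1/2$ to the sum $\sum_{\gamma_i\in\Gamma_{\mathrm{const}}} 1/q_i$ despite itself contributing $0$ to the length sum. Beyond these two traps, the identity $-\chi/\sharp s=1$ should drop out by direct simplification in both cases, and the Euler characteristics and match with the denominators of the boundary slopes follow immediately.
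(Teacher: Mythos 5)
Your calculation is correct and is exactly the paper's argument: the paper proves this proposition by direct substitution into formula~\eqref{Eq:Formula:EulerCharTypeI}, and your bookkeeping (lengths $1/n$, $(n+1)/n$, $(n-1)/n$ with $u=(n-1)/(2n-1)$ for $F$; lengths $(n-9)/(n-7)$, $(n-8)/(n-7)$ with $N_{\mathrm{const}}=1$, $q_1=2$, $u=(n-1)/(2n-8)$ for $F'$) matches the data of $\Gamma$ and $\Gamma'$ and does simplify to $1$ in both cases. Combining with $\sharp s = n$ and $\sharp s = n-7$ from the preceding proposition gives $\chi=-n$ and $\chi=-n+7$, as in the paper.
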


Some examples of the surfaces with the last property were obtained in \cite{Callahan13}. 
The above proposition says that 
$K_n = M( -1/2 , 2/5 , 1/n)$ with an odd positive integer $n \ge 11$ 
admits two distinct essential surfaces enjoying the property.

\section*{Acknowledgements}
The author would like to thank In Dae Jong, Hidetoshi Masai and Kimihiko Motegi for helpful discussions. 
He also thanks to the referees for careful readings.

\bibliographystyle{amsplain}

\end{document}